\newcommand{\U}{{\mathcal U}}
\newcommand{\0}{{\mathbf 0}}
\newcommand{\C}{{\mathbb C}}
\newcommand{\Z}{{\mathbb Z}}
\newcommand{\cL}{{\mathbb L}}
\newcommand{\rank}{\mathop{\rm rank}\nolimits}
\newcommand{\Pdot}{\mathbf P^\bullet}
\newcommand{\vdual}{{\mathcal D}}
\newcommand{\mult}{{\operatorname{mult}}}
\newtheorem{defn0}{Definition}[section]
\newtheorem{prop0}[defn0]{Proposition}
\newtheorem{conj0}[defn0]{Conjecture}
\newtheorem{thm0}[defn0]{Theorem}
\newtheorem{lem0}[defn0]{Lemma}
\newtheorem{corollary0}[defn0]{Corollary}
\newtheorem{example0}[defn0]{Example}
\newtheorem{remark0}[defn0]{Remark}
\newtheorem{question0}[defn0]{Question}
\newtheorem{exercise0}[defn0]{Exercise}
\newenvironment{thm}{\begin{thm0}}{\end{thm0}}
\newenvironment{lem}{\begin{lem0}}{\end{lem0}}
\newenvironment{cor}{\begin{corollary0}}{\end{corollary0}}
\newenvironment{exm}{\begin{example0}\rm}{\end{example0}}
\newenvironment{rem}{\begin{remark0}\rm}{\end{remark0}}
\newcommand{\thmref}[1]{Theorem~\ref{#1}}
\newcommand{\lemref}[1]{Lemma~\ref{#1}}
\newcommand{\corref}[1]{Corollary~\ref{#1}}
\newcommand{\secref}[1]{Section~\ref{#1}}
\newcommand{\mbf}[1]{{\mathbf #1}}
\title[Relative Polar Multiplicities and the Real Link]{Relative Polar Multiplicities and the Real Link}
\subjclass[2010]{32S25, 32S15, 32S55}
\author{David B. Massey}
\date{}
\begin{document}

\begin{abstract} For a hypersurface defined by a complex analytic function, we obtain a chain complex of free abelian groups, with ranks given in terms of relative polar multiplicities, which has cohomology isomorphic to the reduced cohomology of the real link. This leads to Morse-type inequalities between the Betti numbers of the real link of the hypersurface and the relative polar multiplicities of the function.
\end{abstract}

\maketitle

%\newpage

%\tableofcontents

%\newpage

\section{Introduction}

Let $\U$ be an open neighborhood of the origin in $\C^{n+1}$, where $n\geq 1$, and let $f: (\U,\0)\rightarrow (\C,0)$ be a complex analytic function which is not locally constant at $\0$. We let $X:=V(f)$ and let $\Sigma f$ denote the critical locus of $f$. To eliminate the trivial case, we assume that $\0\in\Sigma f$ and let $s:=\dim_\0\Sigma f$. We shall always use $\Z$ for our homology/cohomology coefficients.

\smallskip

In the now-classic 1968 book of Milnor  \cite{milnorsing}, the  {\it real link} of $X$ at $\0$, $K_X$, plays a prominent role (in fact, the real link, though not by name, appears in \L ojasiewicz's 1965 book \cite{lojbooknotes} in a more-general context). Milnor proves in Theorem 5 of \cite{milnorsing} that $K_X$ is $(n-2)$-connected; hence, the  first interesting cohomology group of the real link is $H^{n-1}(K_X)$.

The {complex link} of a complex analytic space at a point was studied in depth by L\^e and Kato in 1975 in \cite{lk}, and for complete intersections by L\^e in 1979 in \cite{levan}. The complex link, and its topological relationship to the real link, was discussed at length in Goresky and MacPherson's 1988 book ``Stratified Morse Theory'' \cite{stratmorse}.

For our hypersurface $X$,  we let $\cL_X$ denote the complex link of $X$ at $\0$. In \cite{levan}, L\^e proves that  $\cL_X$  has the homotopy-type of a finite bouquet of $(n-1)$-spheres and that the number of $(n-1)$-spheres in the bouquet is given by the intersection number $\gamma^1_{f}:=\big(\Gamma^1_{f,L}\cdot V(L)\big)_\0$, where $\Gamma^1_{f,L}$ is the relative polar curve of $f$ with respect to a generic linear form $L$ (see \cite{hammlezariski}, \cite{teissiercargese}, and \cite{lecycles}).

On the cohomological level, it is known, but perhaps not so well-known that  there is an injection of reduced cohomology groups 
$$
\widetilde H^{n-1}(K_X)\hookrightarrow\widetilde H^{n-1}(\cL_X)\cong  \Z^{\gamma^1_{f}}.\leqno{(\dagger)}
$$
For $s=0$, see Corollary 3.3 of \cite{takeuchiperv} or  Proposition 6.1.22 of \cite{dimcasheaves};  for general $s$, see Corollary 4.10 of \cite{milnorfiberlink}. 

\bigskip

There are higher-dimensional relative polar multiplicities $\gamma^1_f, \dots, \gamma^n_f$; we shall define these in \secref{sec:prior}, but note now that $\gamma^0_f=0$, $\gamma^{n+1}_f=1$, and $\gamma^n_f=-1+\mult_\0f$.

By combining and/or modifying several of our previous results, and using these higher-dimensional multiplicities, we quickly obtain in this short paper a surprising generalization of ($\dagger$). This generalization is the main theorem in this paper:

\medskip
\vbox{
\noindent{\bf Theorem} (\thmref{thm:chaincomplex2}):

\smallskip

{\it  Let $\lambda^0_X:=\gamma^0_f+\gamma^1_f=\gamma^1_f$, \ $\lambda^1_X:=\gamma^1_f+\gamma^2_f$, \ $\lambda^2_X:=\gamma^2_f+\gamma^3_f$, $\dots$, $\lambda^{n-1}_X:=\gamma^{n-1}_f+\gamma^n_f$, \ and $\lambda^n_X:=\gamma^n_f+\gamma^{n+1}_f=\gamma^n_f+1=\mult_\0 f$. 

\smallskip

Then, there is a chain complex
$$
0\rightarrow \Z^{\lambda^0_X}\rightarrow \Z^{\lambda^{1}_X}\rightarrow \Z^{\lambda^{2}_X}\rightarrow\cdots\rightarrow  \Z^{\lambda^{n-1}_X}\rightarrow  \Z^{\lambda^n_X}\rightarrow 0
$$
such that the cohomology at the $\lambda^k_X$ term is isomorphic to $\widetilde H^{n+k-1}(K_X)$.
}
}

\medskip

Of course, this theorem implies that $\widetilde H^{n-1}(K_X)$ injects into $\Z^{\gamma^1_{f}}$. It also implies  immediately that one has the following Morse-type inequalities.

\medskip

\noindent{\bf Corollary} (\corref{cor:morselinkinequal}):

\smallskip

{\it For all $k$, let $\tilde b^k:=\rank \widetilde H^k(K_X)$. Then, for all $p$ such that $0\leq p\leq n$, we have the following inequalities:

\medskip

\begin{enumerate}
\item 
$$
(-1)^p\sum_{k=0}^p(-1)^k\tilde b^{n+k-1}\leq (-1)^p\sum_{k=0}^p(-1)^k\lambda^k_X=\gamma^{p+1}_f \textnormal{ and }
$$
\medskip
\item 
$$
(-1)^p\sum_{k=0}^p(-1)^k\tilde b^{2n-k-1}\leq (-1)^p\sum_{k=0}^p(-1)^k\lambda^{n-k}_X=(-1)^p+\gamma^{n-p}_f. 
$$
\end{enumerate}
}

\medskip

\section{Prior definitions and results}\label{sec:prior}

We continue with our notation from the introduction:  $X:=V(f)$ is the hypersurface defined by $f$, $s=\dim_\0\Sigma f$, $K_X$ is the real link of $X$ at $\0$, and $\cL_X$ is the complex link of $X$ at $\mbf 0$.

\medskip

For all $k$ such that $0\leq k\leq n+1$, we  need relative polar varieties/cycles $\Gamma^k_f$, where $k$ is the dimension, as developed by L\^e, Hamm, and Teissier. We have written about these in many, many places; see, for instance, \cite{lecycles} and Example 6.10 of \cite{numinvar}. Recall that, for a linear choice of coordinates $\mbf z:=(z_0, \dots, z_n)$ for $\C^{n+1}$, as a set, 
$$\Gamma^k_{f,\mbf z} \ = \ \overline{V\left(\frac{\partial f}{\partial z_k}, \dots, \frac{\partial f}{\partial z_n}\right)-\Sigma f};
$$ 
this will be purely $k$-dimensional provided that $\mbf z$ is generic enough. We give this a cycle structure by multiplying each irreducible component $C$ by the Milnor number 
$$
\mu_{\mbf p}(f_{|_{V(z_0-p_0, \dots z_{k-1}-p_{k-1})}}),
$$
for a generic point $\mbf p=(p_0, \dots, p_n)\in C$; this yields the {\it $k$-dimensional relative polar cycle of $f$, with respect to $\mbf z$}, which we write as $\Gamma^k_{f,\mbf z}$.  If $\Gamma^k_{f,\mbf z}$ is $k$-dimensional, we define $\gamma^k_{f, \mbf z}$ (or $\gamma^k_{f, \mbf z}(\0)$) to be the intersection number $\big(\Gamma^k_{f,\mbf z}\cdot V(z_0, \dots z_{k-1})\big)_\0$ provided that the intersection is proper.

Corollaire IV.5.4.3 of \cite{teissiervp2} tells us that, for generic $\mbf z$, $\gamma^k_{f, \mbf z}=\mult_\0 \Gamma^k_{f,\mbf z}$ and is independent of the generic choice of $\mbf z$. Thus, we write simply $\gamma^k_f$ for this common generic value; this is the {$k$-dimensional relative polar multiplicity of $f$}. As we mentioned in the introduction, $\gamma^0_f=0$, $\gamma^{n+1}_f=1$, and $\gamma^n_f=-1+\mult_\0f$.

\medskip

Using the Wang sequence of Lemma 8.4 of \cite{milnorsing}, together with the result of Kato and Matsumoto in \cite{katomatsu} that the Milnor fiber in $(n-s-1)$-connected and Alexander duality, we obtain the well-known fact that $H^k(K_X)=0$ unless $k=0$, $k=2n-1$, or $n-1\leq k\leq n+s$. In addition, it is also well-known that the rank of $H^{2n-1}(K_X)$ is equal to the number of irreducible components of $X$ at the origin (as $K$ is compact, this follows at once from Lemma 19.1.1 of \cite{fulton}).

\medskip

\medskip

Now, we need to recall notation/results from \cite{numinvar} in which we used the derived category (of bounded, constructible complexes of sheaves of $\Z$-modules). For generic linear coordinates $\mbf z$, for $0\leq k\leq n$, we use iterated nearby and vanishing cycles to define
$$
\lambda^k_X:=\rank H^0\big(\phi_{z_k}[-1]\psi_{z_{k-1}}[-1]\dots \psi_{z_{0}}[-1]\Z_X^\bullet[n]\big)_\0.
$$
Note that we do not distinguish in our notation between $z_i$ and the restriction  of $z_i$ to subspaces. In the notation of \cite{numinvar}, $\lambda^k_X$ would be $\lambda^k_{\Z^\bullet_X}$. 
\bigskip

We combine a number of our previous results to obtain: 

\medskip

\begin{thm}\label{thm:chaincomplex1} We have the following equalities $\lambda^0_X=\gamma^0_f+\gamma^1_f=\gamma^1_f$, \ $\lambda^1_X=\gamma^1_f+\gamma^2_f$, \ $\lambda^2_X=\gamma^2_f+\gamma^3_f$, $\dots$, $\lambda^{n-1}_X=\gamma^{n-1}_f+\gamma^n_f$, \ and $\lambda^n_X=\gamma^n_f+\gamma^{n+1}_f=\gamma^n_f+1=\mult_\0 f$, and there is a chain complex
$$
0\rightarrow \Z^{\lambda^n_X}\rightarrow \Z^{\lambda^{n-1}_X}\rightarrow \Z^{\lambda^{n-2}_X}\rightarrow\cdots\rightarrow  \Z^{\lambda^1_X}\rightarrow  \Z^{\lambda^0_X}\rightarrow 0
$$

\medskip

\noindent such that the homology/cohomology at the $\lambda^k_X$ term is isomorphic to the stalk cohomology at the origin in degree $-k$ of $\Z^\bullet_X[n]$. Thus, the homology/cohomology at the $\lambda^k_X$ term is isomorphic to $\Z$ when $k=n$ and is zero when $k\neq n$.
\end {thm}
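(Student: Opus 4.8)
The plan is to regard $\Pdot := \Z^\bullet_X[n]$ as a perverse sheaf and to build the asserted complex as the degenerate $E_1$-page of the spectral sequence attached to a generic flag of hyperplane slices, using that each iterated vanishing cycle along such a flag is a skyscraper at $\0$. First I would note that, since $X$ is a hypersurface and hence a local complete intersection of pure dimension $n$, the shifted constant sheaf $\Pdot$ is perverse. Fixing generic linear coordinates $\mbf z=(z_0,\dots,z_n)$, I would prove by induction on $k$ that $\Pdot_k:=\psi_{z_{k-1}}[-1]\cdots\psi_{z_0}[-1]\Pdot$ is a perverse sheaf on $X\cap V(z_0,\dots,z_{k-1})$, using only that the shifted nearby-cycle functors $\psi_{z_i}[-1]$ preserve perversity and that, for generic $\mbf z$, each successive slice is non-characteristic away from $\0$. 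The key structural input is that, for generic $\mbf z$, the iterated vanishing cycle $\phi_{z_k}[-1]\Pdot_k$ is perverse and supported only at $\0$: the generic linear form $z_k$ is a stratified Morse function on $\supp\Pdot_k$, so its only stratified critical point near $\0$ is $\0$ itself. A perverse sheaf supported at a point is concentrated in degree $0$, so $\phi_{z_k}[-1]\Pdot_k$ is the skyscraper $\Z^{\lambda^k_X}$; this both produces the term $\Z^{\lambda^k_X}$ and confirms that $\lambda^k_X$ is a single well-defined rank.

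To assemble the complex I would use the canonical triangles attaching $\phi_{z_k}[-1]\Pdot_k$ as the cone of the comparison between $\Pdot_k$ and its nearby cycles $\Pdot_{k+1}=\psi_{z_k}[-1]\Pdot_k$, and pass to stalk cohomology at $\0$. Since $\Pdot_{n+1}=\psi_{z_n}[-1]\Pdot_n$ vanishes (its input is supported on $V(z_n)$), the tower $\Pdot_0,\dots,\Pdot_{n+1}=0$ exhibits $(\Pdot)_\0$ as an iterated cone of the skyscraper stalks $(\phi_{z_k}[-1]\Pdot_k)_\0$, equivalently identifies $H^*(\Pdot)_\0$ with the abutment of a spectral sequence whose $E_1$-terms are these stalks. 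Because every such term is concentrated in a single cohomological degree, the spectral sequence collapses onto one line and is therefore nothing but a chain complex $0\to\Z^{\lambda^n_X}\to\cdots\to\Z^{\lambda^0_X}\to0$ whose differentials are the induced connecting maps; the relation $d\circ d=0$ is the statement $d_1^2=0$, and the cohomology at the $\Z^{\lambda^k_X}$ term is $E_2=E_\infty$ in the appropriate total degree, namely $H^{-k}(\Pdot)_\0$. Finally, since $\Pdot=\Z^\bullet_X[n]$ and $X$ is locally contractible at $\0$, this stalk equals $H^{n-k}(\Z_X)_\0$, which is $\Z$ for $k=n$ and $0$ otherwise, yielding the last assertion.

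For the numerical equalities $\lambda^k_X=\gamma^k_f+\gamma^{k+1}_f$ I would invoke the prior computations relating iterated vanishing-cycle ranks of the constant sheaf to the relative polar multiplicities, via the characteristic cycle $\cc(\Pdot)$. The rank of $\phi_{z_k}[-1]\Pdot_k$ records the local Morse datum of the generic linear form $z_k$ on the $(n-k)$-dimensional slice, and this datum splits into exactly two contributions---one coming from $\Gamma^k_{f,\mbf z}$ and one from $\Gamma^{k+1}_{f,\mbf z}$---producing the sum $\gamma^k_f+\gamma^{k+1}_f$. The boundary identities then follow from $\gamma^0_f=0$, $\gamma^{n+1}_f=1$, and $\gamma^n_f=\mult_\0 f-1$, giving $\lambda^0_X=\gamma^1_f$ and $\lambda^n_X=\mult_\0 f$.

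I expect the main obstacle to be the construction in the second paragraph: turning the system of specialization triangles into an honest chain complex of free abelian groups with the asserted cohomology. The delicate points are establishing that the vanishing-cycle terms are genuinely concentrated in a single degree for a sufficiently generic flag---so that the spectral sequence degenerates to one complex rather than merely furnishing a spectral sequence---and verifying that the differentials built from the canonical and variation maps compose to zero while $E_2=E_\infty$ faithfully reproduces the stalk. Both hinge on the compatibility of the iterated nearby- and vanishing-cycle functors along the generic flag, which is where the genericity hypotheses on $\mbf z$ must be used most carefully.
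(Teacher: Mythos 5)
Your construction is, in substance, the same one the paper relies on: the paper's proof simply cites Theorems 5.3 and 5.4 of \cite{numinvar}, and those results are proved exactly by the tower of iterated nearby cycles of the perverse sheaf $\Z^\bullet_X[n]$ along a generic flag, with the vanishing-cycle terms perverse and supported at $\0$, assembled via the canonical maps $\psi_{z_k}[-1]\to\phi_{z_k}[-1]$ into a complex whose cohomology is the stalk cohomology. Your spectral-sequence packaging of the specialization triangles is a legitimate repackaging of that argument, and your appeal to prior computations for $\lambda^k_X=\gamma^k_f+\gamma^{k+1}_f$ matches the paper's citation of Example 8.4 of \cite{numinvar}.

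There is, however, one genuine gap: the freeness of the terms. Your argument that $\phi_{z_k}[-1]\Pdot_k$ is perverse and supported at $\0$ yields only that it is concentrated in degree $0$, i.e., that its stalk is a single finitely generated $\Z$-module; over $\Z$ (as opposed to field coefficients) nothing in the perversity formalism rules out torsion, so you are not entitled to write this module as $\Z^{\lambda^k_X}$, where $\lambda^k_X$ is by definition only the \emph{rank}. If torsion were present, the asserted chain complex of free abelian groups would not compute the stalk cohomology of $\Z^\bullet_X[n]$ as stated. The paper addresses this point separately: freeness follows from L\^e's theorem that the complex links of strata of a local complete intersection have the homotopy type of bouquets of spheres \cite{levan} (so the relevant Morse-data cohomology groups are free), combined with the results of \cite{singenrich} or \cite{calcchar}. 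You need some such input; the perversity-plus-point-support argument alone does not supply it. The remaining delicate points you flag (degeneration onto a single line, $d^2=0$ from the triangles) are handled correctly by your setup.
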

\begin{proof} The existence of the chain complex with the given stalk cohomology appears in Theorems 5.3 and 5.4 of \cite{numinvar}, but with different shifts; for us now, we would use $\Pdot:=\Z^\bullet_X[n]$ and include shifts by $-1$ on the nearby and vanishing cycles. 

That the modules are actually free abelian follows from L\^e's result on the homotopy-type of the complex links of strata for a local complete intersection in \cite{levan}, together from the results in \cite{singenrich}, but is more clear from Definition 2.11, Corollary 6.4, and Corollary 8.4 in \cite{calcchar}. 

Finally, the relation between the $\lambda^k_X$ and the $\gamma^k_f$ is given in Example 8.4 of \cite{numinvar}.
\end{proof}

\medskip

\begin{rem}
Whenever one has a chain complex where the terms in the complex have finite rank, there are always Morse inequalities between the ranks of the terms in the complex and the ranks of the cohomology of the complex. In a more-general context, this is presented in Corollary 5.5 of \cite{numinvar}. However, the Morse inequalities which follow from \thmref{thm:chaincomplex1} have no real content; the reader should check that the Morse inequalities yield only that, for all $k$, $\gamma^k_f\geq 0$.
\end{rem}

\bigskip

\section{Morse inequalities and the real link}

The reader should be wondering why we bothered discussing Morse inequalities related to \thmref{thm:chaincomplex1} when we concluded that the related Morse inequalities told us essentially nothing. The reason appears below, where we ``dualize'' the chain complex from \thmref{thm:chaincomplex1} and look at the associated Morse inequalities to yield non-trivial results about the Betti numbers of the real link of $X$.

\medskip

We continue to use $\lambda^0_X=\gamma^1_f$, \ $\lambda^1_X=\gamma^1_f+\gamma^2_f$, \ $\lambda^2_X=\gamma^2_f+\gamma^3_f$, $\dots$, $\lambda^{n-1}_X=\gamma^{n-1}_f+\gamma^n_f$, \ and $\lambda^n_X=\gamma^n_f+1=\mult_\0 f$. 

We let $m_\0:\{\0\}\hookrightarrow X$ be the inclusion, so that
$$H^k(m_\0^!\Z^\bullet_X[n])\cong H^{k+n}(B_\epsilon^\circ\cap X, B_\epsilon^\circ\cap X\backslash\{\0\})\cong \widetilde H^{n+k-1}(K_X),$$
where $B_\epsilon^\circ$ is a small open ball, centered at the origin.

\medskip

\begin{thm}\label{thm:chaincomplex2}   There is a chain complex
$$
0\rightarrow \Z^{\lambda^0_X}\rightarrow \Z^{\lambda^{1}_X}\rightarrow \Z^{\lambda^{2}_X}\rightarrow\cdots\rightarrow  \Z^{\lambda^{n-1}_X}\rightarrow  \Z^{\lambda^n_X}\rightarrow 0
$$
such that the cohomology at the $\lambda^k_X$ term is isomorphic to the costalk cohomology at the origin in degree $k$ of $\Z^\bullet_X[n]$. 

To be precise, the cohomology at the $\lambda^k_X$ term is isomorphic to $H^k(m_\0^!\Z^\bullet_X[n])\cong \widetilde H^{n+k-1}(K_X)$.
\end {thm}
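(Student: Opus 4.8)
The plan is to obtain the complex of \thmref{thm:chaincomplex2} by applying Verdier duality to the construction underlying \thmref{thm:chaincomplex1}. Write $\Pdot:=\Z^\bullet_X[n]$ and let $\vdual$ denote the Verdier dualizing functor on a neighborhood of $\0$ in $X$. I would rely on two facts. First, the over-a-point duality identity $m_\0^!\Pdot\cong R\operatorname{Hom}_\Z(m_\0^*\vdual\Pdot,\Z)$, which follows from the standard commutations $\vdual m_\0^*\cong m_\0^!\vdual$ and $\vdual\vdual\cong\id$ (using that the dualizing complex of a point is $\Z$); thus the costalk of $\Pdot$ is the derived $\Z$-dual of the stalk of $\vdual\Pdot$. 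Second, the self-duality of the shifted functors, $\vdual\,\psi_g[-1]\cong\psi_g[-1]\,\vdual$ and $\vdual\,\phi_g[-1]\cong\phi_g[-1]\,\vdual$ for each coordinate function $g$.

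First I would re-run the construction of Theorems 5.3 and 5.4 of \cite{numinvar}, as packaged in \thmref{thm:chaincomplex1}, on $\vdual\Pdot$ in place of $\Pdot$, producing a chain complex $\tilde A^\bullet$ of the shape $0\to\Z^{\lambda^n_X}\to\cdots\to\Z^{\lambda^0_X}\to 0$ whose cohomology computes the stalk cohomology $H^{-k}(m_\0^*\vdual\Pdot)$. Because the iterated functors $T_k:=\phi_{z_k}[-1]\psi_{z_{k-1}}[-1]\cdots\psi_{z_0}[-1]$ commute with $\vdual$, one has $T_k\vdual\Pdot\cong\vdual(T_k\Pdot)$; the intermediate objects $T_k\Pdot$ are supported at $\0$ with stalk cohomology $\Z^{\lambda^k_X}$ concentrated in degree $0$, so their duals are again free of the same rank $\lambda^k_X$ in degree $0$. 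This is exactly what guarantees both that $\tilde A^\bullet$ has the same term ranks $\lambda^k_X$ as the complex of \thmref{thm:chaincomplex1} and that these terms are free abelian.

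Then I would pass to the $\Z$-dual chain complex $\operatorname{Hom}(\tilde A^\bullet,\Z)$. Reindexing moves the term $\Z^{\lambda^k_X}$ from degree $-k$ to degree $k$, so this dual is precisely the forward complex $0\to\Z^{\lambda^0_X}\to\Z^{\lambda^1_X}\to\cdots\to\Z^{\lambda^n_X}\to 0$ of the statement. Since $\tilde A^\bullet$ is a complex of free abelian groups computing $m_\0^*\vdual\Pdot$, its $\Z$-dual computes $R\operatorname{Hom}_\Z(m_\0^*\vdual\Pdot,\Z)\cong m_\0^!\Pdot$, whence the cohomology at the degree-$k$ term $\Z^{\lambda^k_X}$ is $H^k(m_\0^!\Pdot)$; the universal-coefficient bookkeeping is automatic and correctly records the torsion of the real link precisely because we are dualizing a free complex. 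The final identification $H^k(m_\0^!\Pdot)\cong\widetilde H^{n+k-1}(K_X)$ is the isomorphism recalled immediately before the theorem.

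The step I expect to be the main obstacle is the second one: justifying that the machine of \cite{numinvar} may legitimately be re-run on $\vdual\Pdot$ and still returns the numerical data $\lambda^k_X$ with free terms. This is genuinely necessary rather than cosmetic, and here is the subtlety worth flagging: the naive termwise $\Z$-dual of the complex of \thmref{thm:chaincomplex1} would instead compute the dual of the \emph{stalk} of $\Pdot$, which is torsion-free and concentrated in a single degree, hence the wrong answer. It is exactly the failure of $\Z^\bullet_X[n]$ to be self-dual on the singular space $X$ that makes $\tilde A^\bullet$ differ from the complex of \thmref{thm:chaincomplex1} and produces nontrivial real-link cohomology. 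Controlling this rests on the self-duality of $\psi[-1]$ and $\phi[-1]$ and on the concentration of the intermediate objects $T_k\Pdot$ at $\0$ (so that stalk and costalk agree for them, forcing rank preservation under $\vdual$), together with the freeness inputs of \cite{levan}, \cite{singenrich}, and \cite{calcchar} applied now to $\vdual\Pdot$.
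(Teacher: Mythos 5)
Your proposal is correct and follows essentially the same route as the paper's own proof---specifically its second, ``alternative'' argument: run the construction of \thmref{thm:chaincomplex1} on $\vdual(\Z^\bullet_X[n])$, then Verdier dualize, using that $\vdual$ commutes with $\psi_{z_k}[-1]$ and $\phi_{z_k}[-1]$ (the paper's citation of \cite{natcommute}). Your added remarks on rank preservation, freeness, and why the naive termwise dual of the complex in \thmref{thm:chaincomplex1} would give the wrong answer are accurate elaborations of points the paper leaves implicit.
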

\begin{proof} As was shown just before Theorem 5.4 of \cite{numinvar}, the complex in \thmref{thm:chaincomplex1} is obtained by starting with $\Z^\bullet_X[n]$ and then applying iterated nearby and vanishing cycles, repeatedly using the canonical map from $\psi_{z_k}[-1]$ to  $\phi_{z_k}[-1]$. One obtains the ``dual'' complex in the current theorem by instead repeatedly using the variation map from $\phi_{z_k}[-1]$ to  $\psi_{z_k}[-1]$. 

Alternatively one could begin with the Verdier dual $\vdual(\Z^\bullet_X[n]\big)$ obtain the chain complex analogous to that in \thmref{thm:chaincomplex1} by repeatedly using the canonical map from $\psi_{z_k}[-1]$ to  $\phi_{z_k}[-1]$, then Verdier dualizing, and finally using that Verdier dualizing commutes with $\psi_{z_k}[-1]$ and  $\phi_{z_k}[-1]$ (see \cite{natcommute}).
\end{proof}

\medskip

\begin{rem} Suppose that $n=1$; this special case is easy to analyze. \thmref{thm:chaincomplex2} implies that we have an exact sequence
$$
0\rightarrow \widetilde H^0(K_X)\rightarrow \Z^{-1+\mult_\0 f}\rightarrow  \Z^{\mult_\0 f}\rightarrow \widetilde H^1(K_X)\rightarrow 0.
$$
From this we conclude two obvious things: the reduced Euler characteristic of $K_X$ is $-1$, i.e., the Euler characteristic is $0$; and the number of connected components of $K_X$ is at most $\mult_0 f$.
\end{rem}

\medskip

The following corollary is immediate from the theorem.

\begin{cor}\label{cor:inject} $\widetilde H^{n-1}(K_X)$ injects into $\widetilde H^{n-1}(\cL_X)\cong\Z^{\gamma^1_f}$.
\end{cor}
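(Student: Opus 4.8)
The plan is to extract the injection directly from the left end of the chain complex constructed in \thmref{thm:chaincomplex2}. First I would observe that, since that complex begins $0\rightarrow \Z^{\lambda^0_X}\rightarrow\Z^{\lambda^1_X}\rightarrow\cdots$, there is no incoming differential at the leftmost term. Hence the cohomology of the complex at the $\lambda^0_X$ term is simply the kernel of the first differential $d^0\colon \Z^{\lambda^0_X}\rightarrow \Z^{\lambda^1_X}$, which is by definition a subgroup of the free abelian group $\Z^{\lambda^0_X}$.

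Next I would invoke the identification supplied by the theorem at $k=0$: the cohomology at the $\lambda^0_X$ term is isomorphic to $\widetilde H^{n-1}(K_X)$. Combining this with the previous observation gives a canonical isomorphism $\widetilde H^{n-1}(K_X)\cong \ker d^0$, and the inclusion $\ker d^0\hookrightarrow \Z^{\lambda^0_X}$ therefore yields an injection $\widetilde H^{n-1}(K_X)\hookrightarrow \Z^{\lambda^0_X}$. Since $\lambda^0_X=\gamma^1_f$, the target is precisely $\Z^{\gamma^1_f}$.

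Finally I would close the loop using L\^e's computation recalled in the introduction, namely that $\cL_X$ has the homotopy type of a bouquet of $\gamma^1_f$ spheres of dimension $n-1$, so that $\widetilde H^{n-1}(\cL_X)\cong \Z^{\gamma^1_f}$; composing with the preceding injection produces the desired map $\widetilde H^{n-1}(K_X)\hookrightarrow \widetilde H^{n-1}(\cL_X)$. There is essentially no obstacle here: all of the analytic and sheaf-theoretic content has already been absorbed into \thmref{thm:chaincomplex2}, and what remains is the purely formal remark that the cohomology of a cochain complex at its initial term is a subobject of that term. The only point worth double-checking is the freeness of $\Z^{\lambda^0_X}$, which guarantees that a subgroup injects back into it, but this is built into the statement of the theorem.
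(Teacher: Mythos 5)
Your proposal is correct and matches the paper's intent exactly: the paper states the corollary is immediate from \thmref{thm:chaincomplex2}, and the argument is precisely that the cohomology at the leftmost term $\Z^{\lambda^0_X}=\Z^{\gamma^1_f}$ is the kernel of the first differential, hence a subgroup, combined with L\^e's identification $\widetilde H^{n-1}(\cL_X)\cong\Z^{\gamma^1_f}$. (Your aside about needing freeness for the injection is unnecessary --- a subgroup always includes into the ambient group --- but this does not affect the argument.)
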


\medskip

Now we want to look at the Morse inequalities derived from \thmref{thm:chaincomplex2} and obtain highly non-trivial inequalities.

\medskip

First, we need a trivial lemma, whose proof by ``telescoping'' we leave to the reader:

\begin{lem}\label{lem:telescope} For all $p$ such that $0\leq p\leq n$,

\medskip

\begin{enumerate}
\item $\sum_{k=0}^p(-1)^k\lambda^k_X=(-1)^p\gamma^{p+1}_f$, and
\medskip
\item $\sum_{k=0}^p(-1)^k\lambda^{n-k}_X=1+(-1)^p\gamma^{n-p}_f$.
\end{enumerate}

\medskip

\noindent(Recall that $\gamma^0_f=0$ and $\gamma^{n+1}_f=1$.)
\end{lem}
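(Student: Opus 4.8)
The statement to prove is \lemref{lem:telescope}, two telescoping identities.

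The plan is to prove both identities by direct computation, substituting the definitions of the $\lambda^k_X$ in terms of the $\gamma^k_f$ and collapsing the resulting telescoping sum. Recall from \thmref{thm:chaincomplex1} that $\lambda^k_X=\gamma^k_f+\gamma^{k+1}_f$ for every $k$ with $0\leq k\leq n$ (using the conventions $\gamma^0_f=0$ and $\gamma^{n+1}_f=1$, so that the boundary cases $\lambda^0_X=\gamma^1_f$ and $\lambda^n_X=\gamma^n_f+1$ are subsumed). This is the only input needed; both parts are purely formal manipulations of an alternating sum once this substitution is made.

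For part (1), I would write
$$
\sum_{k=0}^p(-1)^k\lambda^k_X=\sum_{k=0}^p(-1)^k\big(\gamma^k_f+\gamma^{k+1}_f\big).
$$
Reindexing the second piece via $j=k+1$ turns it into $-\sum_{j=1}^{p+1}(-1)^j\gamma^j_f$, and adding the first piece $\sum_{k=0}^p(-1)^k\gamma^k_f$ cancels every interior term, leaving only the $k=0$ contribution $\gamma^0_f=0$ and the $j=p+1$ contribution $(-1)^{p+1}\cdot(-1)\gamma^{p+1}_f=(-1)^p\gamma^{p+1}_f$. Thus the sum equals $(-1)^p\gamma^{p+1}_f$, as claimed.

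For part (2), the same bookkeeping applies with the index shifted to the top of the complex: substituting $\lambda^{n-k}_X=\gamma^{n-k}_f+\gamma^{n-k+1}_f$ and telescoping the alternating sum collapses all interior terms, leaving the boundary contributions $\gamma^{n+1}_f=1$ and $(-1)^p\gamma^{n-p}_f$, which sum to $1+(-1)^p\gamma^{n-p}_f$. There is no real obstacle here; the only point requiring any care is tracking the signs and the shifted index correctly at the two endpoints, and confirming that the conventions $\gamma^0_f=0$ and $\gamma^{n+1}_f=1$ are exactly what make the boundary terms come out to $0$ and $1$ respectively. Since the author explicitly defers this computation to the reader as a ``trivial lemma,'' the telescoping argument above is the intended proof in full.
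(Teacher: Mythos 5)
Your proposal is correct and is exactly the telescoping argument the paper intends: the author explicitly leaves the proof to the reader as a computation ``by telescoping,'' and your substitution $\lambda^k_X=\gamma^k_f+\gamma^{k+1}_f$ followed by reindexing and cancellation, with the conventions $\gamma^0_f=0$ and $\gamma^{n+1}_f=1$ supplying the boundary terms, carries it out correctly in both parts.
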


\medskip

Now, from the theorem and the above lemma, we immediately conclude:

\smallskip

\begin{cor}\label{cor:morselinkinequal}\textnormal{(Morse Link Inequalities)} For all $k$, let $\tilde b^k:=\rank \widetilde H^k(K_X)$. Then, for all $p$ such that $0\leq p\leq n$, we have the following inequalities:

\medskip

\begin{enumerate}
\item 
$$
(-1)^p\sum_{k=0}^p(-1)^k\tilde b^{n+k-1} \ \leq  \ (-1)^p\sum_{k=0}^p(-1)^k\lambda^k_X \ = \ \gamma^{p+1}_f \textnormal{ and }
$$
\medskip
\item 
$$
(-1)^p\sum_{k=0}^p(-1)^k\tilde b^{2n-k-1} \ \leq \ (-1)^p\sum_{k=0}^p(-1)^k\lambda^{n-k}_X \ = \ (-1)^p+\gamma^{n-p}_f. 
$$
\end{enumerate}
\end{cor}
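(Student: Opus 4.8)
The plan is to deduce both families of inequalities from the standard strong Morse inequalities for a bounded complex of finite-rank free abelian groups, applied to the chain complex produced in \thmref{thm:chaincomplex2}, reading that complex once from each end, and then substituting the closed-form evaluations of the alternating sums supplied by \lemref{lem:telescope}. Since all the terms of the complex are literally $\Z^{\lambda^k_X}$ and the groups $\widetilde H^*(K_X)$ are finitely generated, every rank in sight is well defined and additive on short exact sequences, so the Morse machinery applies verbatim.

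First I would recall the Morse inequalities in the precise form needed. For any complex $0\to C^0\to C^1\to\cdots\to C^n\to 0$ of finitely generated abelian groups, write $c^k:=\rank C^k$, $h^k:=\rank H^k$, and $r^k:=\rank\big(d^k\colon C^k\to C^{k+1}\big)$. Additivity of rank on the sequences $0\to\ker d^k\to C^k\to\im d^k\to 0$ and $0\to\im d^{k-1}\to\ker d^k\to H^k\to 0$ gives $h^k=c^k-r^k-r^{k-1}$, and telescoping the alternating partial sum yields
$$
\sum_{k=0}^p(-1)^{p-k}h^k \ = \ \Big(\sum_{k=0}^p(-1)^{p-k}c^k\Big)-r^p \ \leq \ \sum_{k=0}^p(-1)^{p-k}c^k .
$$
This is exactly the content invoked as Corollary 5.5 of \cite{numinvar}. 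Using $(-1)^{p-k}=(-1)^p(-1)^k$, the inequality becomes $(-1)^p\sum_{k=0}^p(-1)^kh^k\leq(-1)^p\sum_{k=0}^p(-1)^kc^k$.

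Next I would apply this to the complex of \thmref{thm:chaincomplex2}, where $c^k=\lambda^k_X$ and, by that theorem, $h^k=\rank\widetilde H^{n+k-1}(K_X)=\tilde b^{n+k-1}$; this is precisely inequality (1), and \lemref{lem:telescope}(1) evaluates its right-hand side as $(-1)^p\cdot(-1)^p\gamma^{p+1}_f=\gamma^{p+1}_f$. For inequality (2) I would run the identical argument on the \emph{same} complex read from the opposite end, i.e.\ on the reindexed family whose $k$-th term is $\Z^{\lambda^{n-k}_X}$; its cohomology in position $k$ is the cohomology of the original complex at the $\lambda^{n-k}_X$ term, namely $\widetilde H^{2n-k-1}(K_X)$ of rank $\tilde b^{2n-k-1}$. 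The strong Morse inequality then reads $(-1)^p\sum_{k=0}^p(-1)^k\tilde b^{2n-k-1}\leq(-1)^p\sum_{k=0}^p(-1)^k\lambda^{n-k}_X$, and \lemref{lem:telescope}(2) turns the right-hand side into $(-1)^p+\gamma^{n-p}_f$.

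There is essentially no deep obstacle here, consistent with the statement being a corollary: the only real care needed is bookkeeping. I must track the two sign conventions $(-1)^{p-k}$ versus $(-1)^k$, and note that reversing the complex for part (2) converts the cochain indexing into homological indexing, which shifts the Morse ``defect'' from $r^p$ to the rank of the incoming differential but leaves the inequality direction unchanged (the defect remains nonnegative). A useful internal check is the case $p=n$, where the defect vanishes and both inequalities collapse to the Euler-characteristic identity; this should agree with the equalities coming from \lemref{lem:telescope}, confirming that the signs have been handled correctly.
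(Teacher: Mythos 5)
Your proposal is correct and matches the paper's approach: the paper treats this corollary as immediate from \thmref{thm:chaincomplex2} and \lemref{lem:telescope} via the standard strong Morse inequalities for a finite-rank chain complex (the general principle it cites as Corollary 5.5 of \cite{numinvar}), applied once from each end of the complex exactly as you do. Your explicit bookkeeping of the nonnegative defects $r^p$ and the incoming-rank analogue for the reversed reading simply spells out what the paper leaves to the reader.
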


\bigskip

\noindent One can also conclude from \thmref{thm:chaincomplex2} and \lemref{lem:telescope} that the Euler characteristic of $K_X$ is 0, but this is well-known.

\begin{exm}

Let $c$ denote the number of irreducible components of $X$ at $\0$; we remind the reader that this equals the rank of $H^{2n-1}(K_X)$. Note that, if $c\neq 1$, then $s=\dim_\0\Sigma f=n-1$.

\medskip

Now suppose that we take $p=0$ in \corref{cor:morselinkinequal}. Then we obtain two inequalities:
$$
\tilde b^{n-1}\leq \gamma^1_f\hskip 0.2in\textnormal{ and }\hskip 0.2in \tilde b^{2n-1}\leq 1+\gamma^n_f.
$$
This first inequality, which we have already discussed, is not obvious. However, the second inequality simply says that $c\leq\mult_\0 f$, which is obviously true.

\medskip

What if $p=1$? Then we obtain two inequalities:
$$
-(\tilde b^{n-1}-\tilde b^n)\leq \gamma^2_f\hskip 0.2in\textnormal{ and }\hskip 0.2in -(\tilde b^{2n-1}-\tilde b^{2n-2})\leq -1+\gamma^{n-1}_f.
$$
Again the first inequality is not obvious and, as far as we know, is a new bound. Is the second inequality obvious? That depends on the size of $s$. We remind the reader $H^k(K_X)=0$ unless $k=0$, $k=2n-1$, or $n-1\leq k\leq n+s$.  So, if $n+s+1\leq 2n-2$, i.e., if $s\leq n-3$ (and hence $c=1$), then $\tilde b^{2n-2}=0$ and the second inequality above becomes $-\tilde b^{2n-1}\leq -1+\gamma^{n-1}_f$, that is, $1=c=\tilde b^{2n-1}\geq 1-\gamma^{n-1}_f$, which is trivially true. On the other hand, if $\tilde b^{2n-2}\neq 0$, which requires that $s\geq n-2$, then the second inequality above is new as far as we know.
 
\end{exm}


\begin{thebibliography}{10}

\bibitem{dimcasheaves}
{Dimca, A.}
\newblock {\em {Sheaves in Topology}}.
\newblock Universitext. Springer-Verlag, 2004.

\bibitem{fulton}
{Fulton, W.}
\newblock {\em {Intersection Theory}}, volume~2 of {\em Ergeb. Math.}
\newblock Springer-Verlag, 1984.

\bibitem{stratmorse}
{Goresky, M., MacPherson, R.}
\newblock {\em {Stratified Morse Theory}}, volume~14 of {\em Ergeb. der Math.}
\newblock Springer-Verlag, 1988.

\bibitem{hammlezariski}
{Hamm, H., L\^e D. T.}
\newblock {Un th\'eor\`eme de Zariski du type de Lefschetz}.
\newblock {\em Ann. Sci. \'Ec. Norm. Sup.}, 6 (series 4):317--366, 1973.

\bibitem{katomatsu}
{Kato, M., Matsumoto, Y.}
\newblock {On the connectivity of the Milnor fibre of a holomorphic function at
  a critical point}.
\newblock {\em Proc. of 1973 Tokyo manifolds conf.}, pages 131--136, 1973.

\bibitem{levan}
{L\^e, D. T.}
\newblock {Sur les cycles \'evanouissants des espaces analytiques}.
\newblock {\em C. R. Acad. Sci. Paris, S\'er. A-B}, 288:A283--A285, 1979.

\bibitem{lk}
{L\^e, D. T., Kato, M.}
\newblock {Vanishing cycles on analytic sets}.
\newblock {\em Proc. Symp. on Alg. Analysis}, R.I.M.S., Kyoto, 1975.

\bibitem{lojbooknotes}
{{\L}ojasiewicz, S.}
\newblock {\em {Ensemble semi-analytiques}}.
\newblock {IHES Lecture notes}, 1965.

\bibitem{numinvar}
{Massey, D.}
\newblock {Numerical Invariants of Perverse Sheaves}.
\newblock {\em Duke Math. J.}, 73(2):307--370, 1994.

\bibitem{lecycles}
{Massey, D.}
\newblock {\em {L\^e Cycles and Hypersurface Singularities}}, volume 1615 of
  {\em Lecture Notes in Math.}
\newblock Springer-Verlag, 1995.

\bibitem{singenrich}
{Massey, D.}
\newblock {Singularities and Enriched Cycles}.
\newblock {\em Pacific J. Math.}, 215, no. 1:35--84, 2004.

\bibitem{milnorfiberlink}
{Massey, D.}
\newblock {Milnor fibers and Links of Local Complete Intersections}.
\newblock {\em International J. of Math.}, 25:18 pages, 2014.

\bibitem{calcchar}
{Massey, D.}
\newblock {Calculations with Characteristic Cycles}.
\newblock \url{https://doi.org/10.48550/arXiv.1101.4304}, 2016.

\bibitem{natcommute}
{Massey, D.}
\newblock {Natural Commuting of Vanishing Cycles and the Verdier Dual}.
\newblock {\em Pacific J. of Math}, 284-2:431--437, 2016.

\bibitem{milnorsing}
{Milnor, J.}
\newblock {\em {Singular Points of Complex Hypersurfaces}}, volume~77 of {\em
  Annals of Math. Studies}.
\newblock Princeton Univ. Press, 1968.

\bibitem{takeuchiperv}
{Takeuchi, K.}
\newblock {Perverse sheaves and Milnor fibers over singular varieties}.
\newblock {\em Adv. Studies in Pure Math.}, 46:211--222, 2007.

\bibitem{teissiercargese}
{Teissier, B.}
\newblock {Cycles \'evanescents, sections planes et conditions de Whitney}.
\newblock {\em Ast\'erisque}, 7-8:285--362, 1973.

\bibitem{teissiervp2}
{Teissier, B.}
\newblock {Vari\'et\'es polaires II: Multiplicit\'es polaires, sections planes,
  et conditions de Whitney, Proc. of the Conf. on Algebraic Geometry, La Rabida
  1981}.
\newblock {\em Springer Lect. Notes}, 961:314--491, 1982.

\end{thebibliography}
\end{document}